\numberwithin{equation}{section}
\numberwithin{figure}{section}
\theoremstyle{plain}
\newtheorem{thm}{\protect\theoremname}[section]
\newtheorem{cor}[thm]{\protect\corollaryname}
\theoremstyle{remark}
\newtheorem{rem}[thm]{\protect\remarkname}
\theoremstyle{plain}
\newtheorem{lem}[thm]{\protect\lemmaname}
\newtheorem{prop}[thm]{\protect\propositionname}
\providecommand{\corollaryname}{Corollary}
\providecommand{\lemmaname}{Lemma}
\providecommand{\propositionname}{Proposition}
\providecommand{\remarkname}{Remark}
\providecommand{\theoremname}{Theorem}
\begin{document}
\subjclass[2020]{Primary: 47B65; Secondary:  46C05, 47A05, 47A63}
\title[Alternating Weighted Residual Flows]{Alternating Weighted Residual Flows and the Non-Commutative Gap}
\begin{abstract}
This work develops a nonlinear analogue of alternating projections
on Hilbert space, based on iterating a weighted residual transformation
that removes the portion of an operator detected by a projection after
conjugation by its square root. Although this map is neither linear
nor variational and falls outside classical operator-mean frameworks,
the alternating flow between two fixed projections is shown to be
monotone and to converge strongly to a positive limit supported on
their common kernel. The analysis identifies an intrinsic representation
of this limit inside the operator range of the initial datum, which
makes it possible to compare the nonlinear limit with the shorted
operator of Anderson-Duffin-Trapp. The nonlinear flow always produces
an operator dominated by the shorted operator, with equality precisely
in the commuting regime. A global energy identity describes how mass
is dissipated at each step of the iteration, and a factorized description
localizes the gap between the nonlinear limit and the classical shorted
operator.
\end{abstract}

\author{James Tian}
\address{Mathematical Reviews, 535 W. William St, Suite 210, Ann Arbor, MI
48103, USA}
\email{james.ftian@gmail.com}
\keywords{Positive operator, shorted operator, alternating projections, nonlinear
dynamics, operator flow, dissipation}

\maketitle
\tableofcontents{}

\section{Introduction}\label{sec:1}

Alternating projections and operator compressions form a classical
part of operator theory. Von Neumann's theorem on iterated projections
\cite{MR34514} identifies the strong limit of the alternating product
$P_{B}P_{A}P_{B}\dots$ as the projection onto the intersection of
the ranges, and Halmos's canonical decomposition for two projections
\cite{MR251519} gives a complete geometric description of this convergence
in terms of rigid and generic sectors. A substantial literature extends
and refines this picture through angle criteria, averaged products,
and infinite projection cycles \cite{MR2252935,MR3773065,MR3145756,MR2903120,MR4310540}. 

In the setting of positive operators, linear compressions lie at the
core of the theory of shorted operators developed by Anderson, Duffin,
and Trapp \cite{MR242573,MR287970,MR356949}. Their work interprets
the shorted operator to a subspace $K$ as the largest positive operator
dominated by $R$ and supported on $K$. It connects to operator ranges
and invariance problems studied by Douglas \cite{MR203464} and by
Fillmore-Williams \cite{MR293441}, and continues through spectral
variants such as \cite{MR2234254}. These constructions share a common
feature: they are linear and their asymptotic behavior is governed
by invariance of subspaces and variational characterizations.

The present note investigates a nonlinear analogue of this classical
picture. Given a projection $P$ on a Hilbert space $H$ and a positive
operator $R$, we consider the residual map 
\[
\Phi_{P}\left(R\right)=R^{1/2}\left(I-P\right)R^{1/2},
\]
which removes the portion of $R$ detected by $P$ after conjugation
by the operator square root. This transformation is nonlinear, fails
to preserve affine structure on the cone of positive operators, and
does not arise from a variational principle or from any operator mean
in the sense of Kubo-Ando \cite{MR563399}. Its long-term behavior
is accordingly more delicate.

We study the dynamical system generated by alternating this map between
two fixed projections $P_{A}$ and $P_{B}$. Starting from an initial
positive operator $R_{0}$, we define 
\[
R_{n+1}=\begin{cases}
\Phi_{P_{B}}\left(R_{n}\right) & n\text{ even},\\
\Phi_{P_{A}}\left(R_{n}\right) & n\text{ odd}.
\end{cases}
\]
Our first result (\prettyref{thm:it}) shows that the sequence $\left(R_{n}\right)$
is monotone in the Loewner order and converges strongly to a positive
limit $R_{\infty}$. This limit is supported on the intersection 
\[
K=\ker P_{A}\cap\ker P_{B},
\]
and is a simultaneous fixed point of both residual maps. Thus, despite
the nonlinearity of the transformation, the convergence mechanism
is governed by the same geometric subspace $K$ that appears in the
linear theory.

A natural point of comparison is the shorted operator $S=R_{0}\vert_{K}$,
the maximal positive operator dominated by $R_{0}$ and supported
on $K$. We show that $R_{\infty}\le S$, but equality need not hold:
the nonlinear dynamics can dissipate mass that the classical shorting
construction preserves. Working intrinsically on 
\[
H_{R_{0}}:=\overline{ran\,(R^{1/2}_{0})},\qquad M:=\overline{\{u\in H_{R_{0}}:R^{1/2}_{0}u\in K\}},
\]
we obtain 
\[
S=R^{1/2}_{0}P_{M}R^{1/2}_{0},\qquad R_{\infty}=R^{1/2}_{0}T_{\infty}R^{1/2}_{0},\qquad0\le T_{\infty}\le P_{M}.
\]
Equality $R_{\infty}=S$ holds if and only if $T_{\infty}=P_{M}$
on $H_{R_{0}}$. Equivalently, the defect $G:=P_{M}-T_{\infty}$ vanishes,
and the gap localizes as 
\[
S-R_{\infty}=R^{1/2}_{0}GR^{1/2}_{0}.
\]
A finite-dimensional example shows the discrepancy can be extreme:
one may have $S>0$ while $R_{\infty}=0$. In the commuting regime,
the defect disappears and $R_{\infty}=S$.

To analyze this discrepancy, we pass to the operator--range space
and factor every $0\le R\le R_{0}$ as $R=R^{1/2}_{0}TR^{1/2}_{0}$
with a unique positive contraction $T$. This yields intrinsic residual
maps $T\mapsto\Psi_{P}\left(T\right)$ that are monotone decreasing,
produce the limit $T_{\infty}$, and make the comparison with shorting
transparent. In parallel, a global energy identity decomposes the
initial operator into the limiting piece and a strongly convergent
series of dissipated terms, providing a quantitative account of mass
loss along the iteration.

The remainder of the paper is organized as follows. \prettyref{sec:2}
develops the iteration, proves monotone strong convergence, and identifies
the geometric support of the limit. \prettyref{sec:3} establishes
the global energy identity and interprets the dissipated mass. \prettyref{sec:4}
gives a finite-dimensional example illustrating strict inequality
with the shorted operator. \prettyref{sec:5} builds the intrinsic
operator-range framework, compares the nonlinear limit to the shorted
operator, localizes the gap, and characterizes equality.

\section{Alternating Weighted Residual Flows}\label{sec:2}

This section sets up the alternating weighted-residual iteration and
proves its basic convergence. We fix two orthogonal projections and
a positive initial operator, show the sequence is monotone and converges
strongly to a limit supported on the common kernel, and record the
resulting fixed-point relations. 
\begin{thm}[alternating residual flow]
\label{thm:it}Let $H$ be a Hilbert space, $P_{A},P_{B}$ orthogonal
projections on $H$, and $R_{0}\in B\left(H\right)$ a positive operator.
For any projection $P$ and any positive operator $R$, define the
residual map 
\[
\Phi_{P}\left(R\right):=R^{1/2}\left(I-P\right)R^{1/2}.
\]
Define a sequence $\left(R_{n}\right)_{n\ge0}$ by 
\[
R_{n+1}=\begin{cases}
\Phi_{P_{B}}\left(R_{n}\right), & n\text{ even},\\[4pt]
\Phi_{P_{A}}\left(R_{n}\right), & n\text{ odd},
\end{cases}\quad n\ge0,
\]
starting from $R_{0}$. Let 
\[
K:=\ker P_{A}\cap\ker P_{B}.
\]
Then the sequence $\left(R_{n}\right)$ is decreasing in the Loewner
order: 
\[
0\le R_{n+1}\le R_{n}\le R_{0}\quad\text{for all }n.
\]
In particular, there exists a (unique) positive operator $R_{\infty}\in B\left(H\right)$
such that 
\[
R_{n}\xrightarrow{s}R_{\infty}
\]
in the strong operator topology. 

Moreover, the limit $R_{\infty}$ satisfies: 
\begin{enumerate}
\item $0\le R_{\infty}\le R_{0}$. 
\item $ran\left(R_{\infty}\right)\subset K$. 
\item $R_{\infty}$ is a fixed point of both residual maps: 
\[
R_{\infty}=\Phi_{P_{A}}(R_{\infty})=\Phi_{P_{B}}(R_{\infty}).
\]
\end{enumerate}
That is, the alternating residual flow converges strongly to a positive
operator $R_{\infty}$ bounded by $R_{0}$ and supported inside $\ker P_{A}\cap\ker P_{B}$,
which is invariant under both residual maps. 

\end{thm}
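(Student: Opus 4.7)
The plan is to derive the three asserted properties from a single elementary monotonicity lemma together with strong continuity of the operator square root along monotone sequences. First I would observe that for any positive $R$ and any projection $P$, the sandwich inequality $0 \leq I-P \leq I$ immediately gives
$$0 \leq R^{1/2}(I-P)R^{1/2} \leq R,$$
so $\Phi_P(R) \leq R$ pointwise. Applied inductively along the alternating scheme this yields $0 \leq R_{n+1} \leq R_n \leq R_0$, and Vigier's theorem for monotone decreasing bounded self-adjoint sequences produces the strong limit $R_\infty$ with $0 \leq R_\infty \leq R_0$, establishing (1).

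For (2) I would exploit the telescoping dissipation identity
$$R_n - R_{n+1} = R_n^{1/2} Q_n R_n^{1/2}, \qquad Q_n = \begin{cases} P_B, & n \text{ even}, \\ P_A, & n \text{ odd}. \end{cases}$$
The left-hand side tends to zero strongly. Because $T \mapsto T^{1/2}$ is operator monotone, $R_n^{1/2}$ is itself monotone decreasing, hence strongly convergent, and a uniqueness-of-square-root argument identifies its limit as $R_\infty^{1/2}$. Joint strong continuity of multiplication on norm-bounded sets then lets me pass to the limit within each parity subsequence, yielding $R_\infty^{1/2} P_A R_\infty^{1/2} = R_\infty^{1/2} P_B R_\infty^{1/2} = 0$. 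Rewriting each vanishing expression as $(P R_\infty^{1/2})^{*}(P R_\infty^{1/2})$ forces $P_A R_\infty^{1/2} = P_B R_\infty^{1/2} = 0$, so $P_A R_\infty = P_B R_\infty = 0$ and $ran(R_\infty) \subset \ker P_A \cap \ker P_B = K$.

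Claim (3) is then immediate: once $P R_\infty^{1/2} = 0$, we have $R_\infty^{1/2} P R_\infty^{1/2} = 0$ and hence $\Phi_P(R_\infty) = R_\infty - R_\infty^{1/2} P R_\infty^{1/2} = R_\infty$ for both $P \in \{P_A, P_B\}$. The main subtlety, and the only non-bookkeeping step in the whole argument, is the passage to the limit inside the dissipation identity: it rests on strong continuity of the square root on the uniformly bounded order interval $[0, R_0]$, and the convergence $R_n \to R_\infty$ is only in the strong operator topology, not in norm. I would either invoke the operator-monotone argument sketched above or cite this continuity from the standard functional-calculus literature rather than reprove it.
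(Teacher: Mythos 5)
Your proposal is correct; the one genuinely delicate ingredient --- strong convergence of $R_n^{1/2}$ to $R_\infty^{1/2}$ along the monotone decreasing sequence, combined with joint strong continuity of multiplication on norm-bounded sets --- is exactly the lemma the paper also relies on, and you identify it as the crux. Where you differ is in the organization of parts (2) and (3). The paper first splits into even and odd subsequences, proves the cross relations $R_{\mathrm{odd}}=\Phi_{P_B}(R_{\mathrm{even}})$ and $R_{\mathrm{even}}=\Phi_{P_A}(R_{\mathrm{odd}})$, uses monotonicity to force $R_{\mathrm{even}}=R_{\mathrm{odd}}=R_\infty$, and only then extracts $P R_\infty^{1/2}=0$ from the fixed-point equations to get the range condition. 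You instead pass to the limit directly in the one-step dissipation identity $R_n-R_{n+1}=R_n^{1/2}Q_nR_n^{1/2}$: since the whole sequence already converges to a single limit $R_\infty$ by Vigier, the left side tends to zero and each parity subsequence of the right side tends to $R_\infty^{1/2}P R_\infty^{1/2}$, giving $R_\infty^{1/2}P_A R_\infty^{1/2}=R_\infty^{1/2}P_B R_\infty^{1/2}=0$ in one stroke; both the support statement and the fixed-point property then fall out of $P R_\infty^{1/2}=0$. Your route avoids the even/odd sublimit bookkeeping entirely (those sublimits coincide automatically once the full sequence converges) and inverts the paper's logical order, deriving the vanishing of the dissipated terms first and the fixed-point relations as a corollary; the paper's version, in exchange, makes the two-step composition $\Phi_{P_A}\circ\Phi_{P_B}$ and its fixed-point structure explicit, which is reused conceptually later. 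Both arguments are complete and rest on the same analytic fact.
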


\begin{proof}
Fix an orthogonal projection $P$ and a positive operator $R\in B\left(H\right)$.
For any $x\in H$, 
\[
\left\langle x,\Phi_{P}\left(R\right)x\right\rangle =\Vert\left(I-P\right)R^{1/2}x\Vert^{2}\ge0.
\]
Thus $\Phi_{P}(R)\ge0$. Since $\Phi_{P}\left(R\right)=R-R^{1/2}PR^{1/2}$
and $R^{1/2}PR^{1/2}\geq0$, we get 
\begin{equation}
0\le\Phi_{P}\left(R\right)\le R\label{eq:a-1}
\end{equation}
in the Loewner order. 

By definition, $R_{n+1}=\Phi_{P_{*}}\left(R_{n}\right)$, where $P_{*}=P_{B}$
for $n$ even and $P_{*}=P_{A}$ for $n$ odd. So, by \eqref{eq:a-1},
\[
0\leq R_{n+1}\leq R_{n}\leq R_{0}
\]
for all $n$. We recall a standard fact (see e.g., \cite{MR493419}):
If $\left(S_{n}\right)$ is a bounded decreasing sequence of positive
operators in $B\left(H\right)$, then there exists a unique positive
operator $S$ such that $S_{n}\xrightarrow{s}S$. Applying this to
$S_{n}=R_{n}$, we obtain a unique positive operator $R_{\infty}$
such that $R_{n}\xrightarrow{s}R_{\infty}.$ Since the Loewner order
is closed under strong limits for decreasing sequences, we have $0\le R_{\infty}\le R_{0}$. 

For the fixed point property, it is convenient to look at even and
odd indices separately. Define 
\[
T:=\Phi_{P_{A}}\circ\Phi_{P_{B}}.
\]
Then for each $n$, $R_{2n}=T^{n}\left(R_{0}\right)$. The sequence
$\left(R_{2n}\right)$ is positive, bounded, and decreasing, hence
it converges strongly to some $R_{\mathrm{even}}\ge0$, i.e., $R_{2n}\xrightarrow{s}R_{\mathrm{even}}$.
Similarly, $R_{2n+1}\xrightarrow{s}R_{\mathrm{odd}}$ for some $R_{\mathrm{odd}}\ge0$.
By construction, $R_{2n+1}=\Phi_{P_{B}}\left(R_{2n}\right)$. 

We now show that $R_{\mathrm{odd}}=\Phi_{P_{B}}\left(R_{\mathrm{even}}\right)$.
For this, we need the continuity of the square root map on such monotone
sequences. A standard result says: If $S_{n}$ is a bounded decreasing
sequence of positive operators that converges strongly to $S$, then
$S^{1/2}_{n}$ also decreases and converges strongly to $S^{1/2}$.

Apply this to $S_{n}=R_{2n}$ and $S=R_{\mathrm{even}}$. Then $R^{1/2}_{2n}\xrightarrow{s}R^{1/2}_{\mathrm{even}}$.
Fix any $x\in H$. Then 
\[
\left\langle x,R_{2n+1}x\right\rangle =\left\langle x,\Phi_{P_{B}}\left(R_{2n}\right)x\right\rangle =\Vert\left(I-P_{B}\right)R^{1/2}_{2n}x\Vert^{2}.
\]
Since $I-P_{B}$ is bounded and $R^{1/2}_{2n}x\to R^{1/2}_{\mathrm{even}}x$,
we have 
\[
\left(I-P_{B}\right)R^{1/2}_{2n}x\to\left(I-P_{B}\right)R^{1/2}_{\mathrm{even}}x
\]
in norm. Thus 
\[
\Vert\left(I-P_{B}\right)R^{1/2}_{2n}x\Vert^{2}\to\Vert\left(I-P_{B}\right)R^{1/2}_{\mathrm{even}}x\Vert^{2}.
\]
The left-hand $\left\langle x,R_{2n+1}x\right\rangle $ converges
to $\left\langle x,R_{\mathrm{odd}}x\right\rangle $. Therefore 
\begin{align*}
\left\langle x,R_{\mathrm{odd}}x\right\rangle  & =\Vert\left(I-P_{B}\right)R^{1/2}_{\mathrm{even}}x\Vert^{2}\\
 & =\langle x,R^{1/2}_{\mathrm{even}}\left(I-P_{B}\right)R^{1/2}_{\mathrm{even}}x\rangle=\langle x,\Phi_{P_{B}}\left(R_{\mathrm{even}}\right)x\rangle
\end{align*}
for all $x$. By polarization and boundedness, this implies 
\[
R_{\mathrm{odd}}=\Phi_{P_{B}}\left(R_{\mathrm{even}}\right).
\]
Similarly, since $R_{2n+2}=\Phi_{P_{A}}\left(R_{2n+1}\right)$, and
$R_{2n+1}\xrightarrow{s}R_{\mathrm{odd}}$, the same reasoning yields
\[
R_{\mathrm{even}}=\Phi_{P_{A}}\left(R_{\mathrm{odd}}\right).
\]

On the other hand, the inequalities $R_{2n}\ge R_{2n+1}$ for all
$n$ imply $R_{\mathrm{even}}\ge R_{\mathrm{odd}}$. Thus, by \eqref{eq:a-1},
\[
\Phi_{P_{A}}(R_{\mathrm{odd}})\le R_{\mathrm{odd}}.
\]
But we have just shown $\Phi_{P_{A}}(R_{\mathrm{odd}})=R_{\mathrm{even}}$,
so $R_{\mathrm{even}}\le R_{\mathrm{odd}}$. This shows that $R_{\mathrm{even}}=R_{\mathrm{odd}}=:R_{\infty}$.
Moreover, $R_{\infty}=\Phi_{P_{B}}\left(R_{\infty}\right)=\Phi_{P_{A}}\left(R_{\infty}\right)$. 

Fix any projection $P$ and positive operator $R$ satisfying $R=R^{1/2}\left(I-P\right)R^{1/2}$.
Then 
\[
0=R-R^{1/2}\left(I-P\right)R^{1/2}=R^{1/2}PR^{1/2}.
\]
For any $x\in H$, 
\[
\langle x,R^{1/2}PR^{1/2}x\rangle=\langle PR^{1/2}x,PR^{1/2}x\rangle=\Vert PR^{1/2}x\Vert^{2},
\]
so $\Vert PR^{1/2}x\Vert^{2}=0$ for all $x$, hence $PR^{1/2}=0$.
Therefore $ran\,(R^{1/2})\subset\ker P$, and consequently 
\[
ran\left(R\right)\subset ran\,(R^{1/2})\subset\ker P.
\]
Apply this with $R=R_{\infty}$ and $P=P_{A}$, using $R_{\infty}=\Phi_{P_{A}}(R_{\infty})$.
We get $\mathrm{Ran}(R_{\infty})\subset\ker P_{A}$. Likewise, using
$R_{\infty}=\Phi_{P_{B}}(R_{\infty})$, $ran\,(R_{\infty})\subset\ker P_{B}$.
Thus $ran\,(R_{\infty})\subset\ker P_{A}\cap\ker P_{B}=K$.
\end{proof}
The corollary below identifies the entire set of attainable limits
of the alternating WR flow: the dynamics converge precisely to the
positive cone $B\left(K\right)_{+}$.
\begin{cor}
$\left\{ R_{\infty}\left(R\right):R\in B\left(H\right)_{+}\right\} =B\left(K\right)_{+}$. 
\end{cor}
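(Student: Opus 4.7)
The plan is to establish the two inclusions separately; both will reduce quickly to \prettyref{thm:it}.

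For the inclusion $\subseteq$, I would fix any $R\in B\left(H\right)_{+}$ and apply part (2) of \prettyref{thm:it}, which gives $ran\left(R_{\infty}\left(R\right)\right)\subseteq K$. Since $R_{\infty}\left(R\right)$ is positive (hence self-adjoint), it also annihilates $K^{\perp}$, so it descends to an element of $B\left(K\right)_{+}$ in the obvious way.

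For the reverse inclusion $\supseteq$, I would fix $S\in B\left(K\right)_{+}$, viewed as a positive operator on $H$ by extension by zero on $K^{\perp}$, and show that the orbit starting from $R_{0}:=S$ is stationary, so that $R_{\infty}\left(S\right)=S$. The key verification is that $ran\,(S^{1/2})\subseteq K$: using the standard identity $\overline{ran\,(S^{1/2})}=\overline{ran\left(S\right)}$ for a positive operator, together with the closedness of $K=\ker P_{A}\cap\ker P_{B}$, one obtains
\[
ran\,(S^{1/2})\subseteq\overline{ran\left(S\right)}\subseteq K\subseteq\ker P_{A}\cap\ker P_{B}.
\]
It follows that $P_{A}S^{1/2}=0=P_{B}S^{1/2}$, and therefore
\[
\Phi_{P_{A}}\left(S\right)=S-S^{1/2}P_{A}S^{1/2}=S,\qquad\Phi_{P_{B}}\left(S\right)=S-S^{1/2}P_{B}S^{1/2}=S.
\]
Thus $S$ is a simultaneous fixed point of both residual maps; the iteration starting at $R_{0}=S$ is constant, and $R_{\infty}\left(S\right)=S$, realizing an arbitrary element of $B\left(K\right)_{+}$ as a limit of the alternating WR flow.

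There is essentially no serious obstacle: once \prettyref{thm:it} is in hand, the argument is a short verification, and the only mildly technical point is the range identity $\overline{ran\,(S^{1/2})}=\overline{ran\left(S\right)}$ for positive $S$, which is classical and already implicit in the strong-convergence discussion of \prettyref{thm:it}.
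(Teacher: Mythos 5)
Your proposal is correct and follows essentially the same route as the paper: the forward inclusion is immediate from part (2) of \prettyref{thm:it}, and the reverse inclusion is obtained by checking that any $S\in B\left(K\right)_{+}$ is a simultaneous fixed point of $\Phi_{P_{A}}$ and $\Phi_{P_{B}}$ via $ran\,(S^{1/2})\subseteq\overline{ran\left(S\right)}\subseteq K$. Your containment $ran\,(S^{1/2})\subseteq\overline{ran\left(S\right)}$ is exactly the fact the paper invokes, so there is nothing to add.
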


\begin{proof}
For any $R_{0}\in B\left(H\right)_{+}$, let $R_{\infty}\left(R\right)$
be its corresponding fixed point under the alternating WR flow. By
the theorem, $ran\left(R_{\infty}\left(R\right)\right)\subset K$,
and so $R_{\infty}\left(R\right)\in B\left(K\right)_{+}$. 

Conversely, let $T\in B\left(K\right)_{+}$. Since $\text{Ran}\left(T\right)\subset K$
and $K$ is closed, we have 
\[
ran\,(T^{1/2})=\overline{ran\left(T\right)}\subset K.
\]
Since $K=\ker P_{A}\cap\ker P_{B}$, this means $P_{A}T^{1/2}=P_{B}T^{1/2}=0$,
therefore $\Phi_{P_{A}}\left(T\right)=\Phi_{P_{B}}\left(T\right)=T$,
and $R_{\infty}\left(T\right)=T$\@.
\end{proof}

\section{Energy Decomposition and Dissipation}\label{sec:3}

In this section we record a global identity for the alternating WR
iteration which expresses the initial operator $R_{0}$ as the sum
of the limit $R_{\infty}$ and the cumulative ``dissipated'' components
extracted at each step. This provides a constructive resolution of
the case $R_{\infty}=0$ and clarifies how mass is removed from the
iteration by the alternating projections.

Throughout, let $P_{n}=P_{B}$ for $n$ even and $P_{n}=P_{A}$ for
$n$ odd, so that $R_{n+1}=\Phi_{P_{n}}\left(R_{n}\right)$.
\begin{thm}
\label{thm:4-1}Let $\left(R_{n}\right)_{n\ge0}$ be the alternating
WR sequence generated by $R_{0}\ge0$. Then 
\[
R_{0}=R_{\infty}+\sum^{\infty}_{n=0}R^{1/2}_{n}P_{n}R^{1/2}_{n},
\]
where each $D_{n}:=R^{1/2}_{n}P_{n}R^{1/2}_{n}$ is positive and the
series converges in the strong operator topology. 
\end{thm}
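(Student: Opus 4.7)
The plan is to recognize \prettyref{thm:4-1} as a telescoping identity and reduce it to the strong convergence already established in \prettyref{thm:it}. The starting observation is that the residual map admits the alternative form
\begin{equation*}
\Phi_P(R) = R^{1/2}(I-P)R^{1/2} = R - R^{1/2} P R^{1/2},
\end{equation*}
so the recursion $R_{n+1} = \Phi_{P_n}(R_n)$ becomes the one-step \emph{dissipation identity}
\begin{equation*}
R_n - R_{n+1} = R_n^{1/2} P_n R_n^{1/2} =: D_n \ge 0.
\end{equation*}

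First, I would telescope this identity over $n = 0, 1, \ldots, N$ to obtain
\begin{equation*}
R_0 - R_{N+1} = \sum_{n=0}^{N} D_n,
\end{equation*}
which is an exact equality in $B(H)$ for every finite $N$ and uses nothing beyond the definition of the iteration and the positivity of each $D_n$.

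Next, I would pass to the strong limit. By \prettyref{thm:it}, $R_{N+1}\xrightarrow{s} R_\infty$, so the left-hand side converges strongly to $R_0 - R_\infty$. Consequently the partial sums $\sum_{n=0}^{N} D_n$ converge strongly to $R_0 - R_\infty$, which is exactly the asserted identity after rearrangement. The convergence of the series is automatic from two viewpoints that reinforce each other: the telescoping directly identifies the partial sums, and independently the sequence of partial sums is monotone increasing in the Loewner order (since $D_n \ge 0$) and bounded above by $R_0$, so the standard strong-convergence theorem for bounded monotone sequences of positive operators applies.

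The proof has no real obstacle: after unpacking $\Phi_{P_n}$, the statement is an accounting identity. All substantive work was absorbed into \prettyref{thm:it}, which supplies the strong limit $R_\infty$; the only mild point worth being explicit about is that one must resist computing $\sum D_n$ term-by-term and instead read it off from the telescoping, since the $D_n$ need not have any useful closed form individually.
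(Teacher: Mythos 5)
Your proposal is correct and follows essentially the same route as the paper: rewrite $\Phi_{P_n}(R_n)=R_n-R_n^{1/2}P_nR_n^{1/2}$, telescope to get $\sum_{n=0}^N D_n = R_0 - R_{N+1}$, and pass to the strong limit using the convergence $R_{N+1}\xrightarrow{s}R_\infty$ from \prettyref{thm:it}, with the monotone-bounded argument confirming strong convergence of the series. No gaps.
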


\begin{proof}
By definition of the WR update, 
\[
R_{n+1}=R^{1/2}_{n}\left(I-P_{n}\right)R^{1/2}_{n}=R_{n}-R^{1/2}_{n}P_{n}R^{1/2}_{n}.
\]
Thus 
\[
R_{n}-R_{n+1}=R^{1/2}_{n}P_{n}R^{1/2}_{n}=:D_{n}\ge0.
\]
Summing from $n=0$ to $N$ gives the telescoping identity 
\[
\sum^{N}_{n=0}D_{n}=\sum^{N}_{n=0}\left(R_{n}-R_{n+1}\right)=R_{0}-R_{N+1}.
\]
By \prettyref{thm:it}, $R_{N+1}\to R_{\infty}$ strongly. Since the
partial sums $\sum^{N}_{n=0}D_{n}$ form an increasing net of positive
operators bounded above by $R_{0}$, they converge strongly to a positive
operator $D$, and passing to the strong limit in the identity above
yields 
\[
D=R_{0}-R_{\infty}.
\]
Hence 
\[
R_{0}=R_{\infty}+\sum^{\infty}_{n=0}D_{n}=R_{\infty}+\sum^{\infty}_{n=0}R^{1/2}_{n}P_{n}R^{1/2}_{n},
\]
with strong convergence. Uniqueness is immediate from the deterministic
definition of $\left(R_{n}\right)$. 
\end{proof}
\begin{cor}
\label{cor:4-2}The limit satisfies $R_{\infty}=0$ if and only if
\[
R_{0}=\sum^{\infty}_{n=0}R^{1/2}_{n}P_{n}R^{1/2}_{n}\quad\text{ (strongly)}.
\]
\end{cor}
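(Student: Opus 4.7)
The plan is to observe that this corollary is an immediate rearrangement of the global energy identity established in \prettyref{thm:4-1}, so the main task is simply to isolate $R_\infty$ on one side of that identity and argue that the passage is legal in the strong operator topology.

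First I would recall from \prettyref{thm:4-1} that we already have the strongly convergent decomposition
\[
R_0 \;=\; R_\infty \;+\; \sum_{n=0}^{\infty} R_n^{1/2} P_n R_n^{1/2},
\]
and that the series on the right converges strongly because its partial sums form an increasing net of positive operators bounded above by $R_0$. For the forward direction, assuming $R_\infty = 0$, substituting into the identity gives the stated equality directly.

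For the converse, suppose $R_0 = \sum_{n=0}^{\infty} R_n^{1/2} P_n R_n^{1/2}$ strongly. Combining with the identity of \prettyref{thm:4-1}, both expansions agree on $R_0$, and subtracting the (strongly convergent) series from both sides yields $R_\infty = 0$. The subtraction is unproblematic because the series converges strongly to $R_0 - R_\infty$ by the theorem, so $R_\infty$ is uniquely determined by the difference and must vanish.

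There is essentially no obstacle here: the statement is a direct algebraic consequence of the telescoping identity already proved, and the only subtlety is to note that strong convergence of the partial sums is preserved under subtraction of a fixed positive operator, which is trivial. I would keep the proof to two or three lines, simply citing \prettyref{thm:4-1} and noting that $R_\infty = R_0 - \sum_{n=0}^\infty D_n$ vanishes precisely when the series sums back to $R_0$.
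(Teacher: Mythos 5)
Your proposal is correct and matches the paper's intent exactly: the corollary is stated without a separate proof precisely because it is the immediate rearrangement of the identity $R_0 = R_\infty + \sum_{n\ge 0} D_n$ from \prettyref{thm:4-1}, which is all your argument uses. Nothing is missing.
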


\begin{rem}
The term $D_{n}=R^{1/2}_{n}P_{n}R^{1/2}_{n}$ is precisely the component
of $R_{n}$ lying in the range of the projection selected at step
$n$. The decomposition of \prettyref{thm:4-1} therefore expresses
the evolution as a global balance: 
\[
\text{initial operator}=\text{surviving component on }K+\text{total dissipated mass}.
\]
In particular, the dissipated operator $\sum_{n\ge0}D_{n}=R_{0}-R_{\infty}$
aggregates all leakage through $P_{A}$ and $P_{B}$ over the entire
iteration.
\end{rem}

\section{A Finite-Dimensional Example}\label{sec:4}

We now give a concrete $2\times2$ example showing that the limiting
operator $R_{\infty}$ from the WR iteration need not coincide with
the maximal positive operator under $R_{0}$ supported on $K=\ker P_{A}\cap\ker P_{B}$.
In particular, $R_{\infty}$ may be strictly smaller (even zero) while
the maximal such operator is nonzero.

Let $H=\mathbb{R}^{2}$ with standard basis $\{e_{1},e_{2}\}$. Let
\[
R_{0}=\begin{pmatrix}5 & 3\\[2pt]
3 & 2
\end{pmatrix}.
\]
Define $P_{A}=\left|e_{1}\left\rangle \right\langle e_{1}\right|$
and $P_{B}=0$, so that 
\[
\ker P_{A}=\mathrm{span}\{e_{2}\},\quad\ker P_{B}=\mathbb{R}^{2},
\]
and 
\[
K=\ker P_{A}\cap\ker P_{B}=\mathrm{span}\{e_{2}\}.
\]
We choose $P_{B}=0$ so that $\Phi_{P_{B}}$ is the identity, and
the alternating iteration reduces to repeated application of $\Phi_{P_{A}}$.

\subsection*{Shorted/Schur complement-type maximal operator on $K$}

We first compute the maximal positive operator $X$ with: 
\[
0\le X\le R_{0},\quad ran\left(X\right)\subset K=\mathrm{span}\{e_{2}\}.
\]
Any positive operator $X$ supported on $\mathrm{span}\{e_{2}\}$
has the form 
\[
X=\begin{pmatrix}0 & 0\\[2pt]
0 & t
\end{pmatrix},\quad t\ge0.
\]
The inequality $X\le R_{0}$ is equivalent to $R_{0}-X\ge0$, i.e.,
\[
R_{0}-X=\begin{pmatrix}5 & 3\\[2pt]
3 & 2-t
\end{pmatrix}\geq0.
\]
Equivalently, 
\[
\det(R_{0}-X)=1-5t\geq0\Longleftrightarrow t\le0.2.
\]
Therefore the maximal possible choice is
\[
X=\begin{pmatrix}0 & 0\\[2pt]
0 & 0.2
\end{pmatrix}.
\]
One can check directly that $R_{0}-X$ is positive semidefinite: 
\[
R_{0}-X=\begin{pmatrix}5 & 3\\[2pt]
3 & 1.8
\end{pmatrix}
\]
with $\det=0$. 

\subsection*{WR iteration and its limit}

Now we compute the WR sequence $\left(R_{n}\right)$ for this data.
Since $P_{B}=0$, the map $\Phi_{P_{B}}$ is the identity: 
\[
\Phi_{P_{B}}(R)=R^{1/2}(I-0)R^{1/2}=R.
\]
Thus the alternating scheme reduces to: 
\[
R_{n+1}=\Phi_{P_{A}}(R_{n}),\quad R_{0}\ \text{given}.
\]
To make the calculations more explicit, note that 
\[
R^{1/2}_{0}=\begin{pmatrix}2 & 1\\[2pt]
1 & 1
\end{pmatrix}.
\]
Then we have 
\[
R_{1}=\Phi_{P_{A}}\left(R\right)=R^{1/2}P^{\perp}_{A}R^{1/2}=\begin{pmatrix}1 & 1\\[2pt]
1 & 1
\end{pmatrix}.
\]

Observe that $R_{1}=2P_{v}$, where 
\[
v=\frac{1}{\sqrt{2}}\begin{pmatrix}1\\[2pt]
1
\end{pmatrix},\quad P_{v}=\left|v\left\rangle \right\langle v\right|=\frac{1}{2}\begin{pmatrix}1 & 1\\[2pt]
1 & 1
\end{pmatrix}.
\]
Hence, 
\[
R^{1/2}_{1}=\sqrt{2}P_{v}.
\]
It follows that 
\[
R_{2}=\Phi_{P_{A}}\left(R_{1}\right)=R^{1/2}_{1}P^{\perp}_{A}R^{1/2}_{1}=\frac{1}{2}R_{1}.
\]

By induction, for $n\ge1$, 
\[
R_{n}=2^{-(n-1)}R_{1}\xrightarrow[n\rightarrow\infty]{}R_{\infty}=0.
\]

\subsection{Discussion}

The example above shows a phenomenon that contrasts with standard
results in the operator-theoretic literature: the nonlinear residual
transformation 
\begin{equation}
\Phi_{P}\left(R\right)=R^{1/2}\left(I-P\right)R^{1/2},\label{eq:b-1}
\end{equation}
when iterated alternately along two projections, converges to a positive
operator supported in $\ker P_{A}\cap\ker P_{B}$, yet this limit
is generally strictly smaller than the maximal operator satisfying
this support constraint. 

1. For a positive operator $R_{0}$ and a subspace $K$, the shorted
operator $R_{0}|_{K}$ (also understood as a generalized Schur complement)
is the largest positive operator $X\le R_{0}$ with $ran\left(X\right)\subseteq K$.
This notion was introduced and developed in depth by Anderson, Duffin,
and Trapp \cite{MR242573,MR287970,MR356949}; see also \cite{MR24575,MR24574}
and \cite{MR203464}. If $P$ is the projection onto $K^{\perp}$,
the shorted operator admits the variational characterization: 
\[
\left\langle x,R_{0}|_{K}x\right\rangle =\inf_{y\in K^{\perp}}\left\langle x+y,R_{0}\left(x+y\right)\right\rangle ,\quad x\in K.
\]
The 2D computation above shows that the operator produced by WR iteration
may be strictly smaller than $R_{0}|_{K}$.

2. Classical compressions of the form 
\[
R\longmapsto(I-P)R(I-P)
\]
are linear and order-preserving. They arise in settings such as Halmos's
work on two projections \cite{MR251519} and the theory of alternating
projections. The WR map \eqref{eq:b-1} is neither linear nor order-preserving,
and it coincides with compression only in the commuting case $[R,P]=0$.
As the example illustrates, this can result in the iterative suppression
of the operator's magnitude even in directions orthogonal to $\mathrm{Ran}(P)$.
This behavior is distinct from classical alternating projection algorithms
(von Neumann, Halmos) and modern extensions in convex optimization,
which typically rely on linearity, norm-contractivity, or monotonicity
in the Loewner order.

3. A broad class of nonlinear operator transformations is captured
by the Kubo-Ando theory of operator means \cite{MR563399}, where
each mean is monotone, jointly concave, and order-preserving. The
map \eqref{eq:b-1} does not constitute an operator mean in this sense,
as it fails monotonicity, concavity, and the transformer inequality. 

4. Nonlinear dynamical systems on positive operators, such as power
means, Riccati flows, and fixed-point iterations of completely positive
maps, tend to rely on order-theoretic structure or convexity. The
WR map operates outside these constraints. 

In summary, the operator $R_{\infty}$ defines a specific construction
associated with the triple $(R_{0},P_{A},P_{B})$, arising from a
nonlinear residual dynamic rather than from variational principles.
This suggests several directions for further inquiry, such as quantitative
comparisons with the shorted operator, and characterization of the
commuting regime. 

\section{Comparison: $R_{\infty}$ vs the shorted operator $R_{0}|_{K}$}\label{sec:5}

We keep the notation from \prettyref{thm:it}: $H$ is a Hilbert space,
$P_{A},P_{B}$ orthogonal projections on $H$, $R_{0}\in B\left(H\right)$
positive, and 
\[
\Phi_{P}\left(R\right):=R^{1/2}\left(I-P\right)R^{1/2},\qquad R_{n+1}=\begin{cases}
\Phi_{P_{B}}\left(R_{n}\right), & n{\rm ~even},\\
\Phi_{P_{A}}\left(R_{n}\right), & n{\rm ~odd},
\end{cases}
\]
with $R_{0}$ given. We write 
\[
K:=\ker\left(P_{A}\right)\cap\ker\left(P_{B}\right)
\]
and let $P_{K}$ be the orthogonal projection onto $K$. 

By \prettyref{thm:it}, $0\le R_{n+1}\le R_{n}\le R_{0}$ for all
$n$, hence $R_{n}\stackrel{s}{\longrightarrow}R_{\infty}\ge0$, with
$R_{\infty}H\subset K$ and $R_{\infty}=\Phi_{P_{A}}\left(R_{\infty}\right)=\Phi_{P_{B}}\left(R_{\infty}\right)$.
We denote by 
\[
S:=R_{0}|_{K}
\]
the shorted operator of $R_{0}$ to $K$. 

A key structural fact about the WR iteration is that the limit $R_{\infty}$
(by construction) satisfies
\begin{equation}
0\le R_{\infty}\le S\leq R_{0}.\label{eq:f-1}
\end{equation}
This allows us to transfer square-root range information from $R_{0}$
to $R_{\infty}$. We use the classical Douglas factorization in the
form below.

Set $H_{R_{0}}:=\overline{R^{1/2}_{0}H}\subset H$. We recall the
standard relation $\ker R^{1/2}_{0}=\ker R_{0}=(H_{R_{0}})^{\perp}$,
which implies that the restriction of $R^{1/2}_{0}$ to $H_{R_{0}}$
is injective.
\begin{lem}
\label{lem:5-2}Let $A,B\in B\left(H\right)_{+}$ with $0\le A\le B$.
Then there exists a bounded operator 
\[
X:\overline{ran\,(B^{1/2})}\to\overline{ran\,(A^{1/2})}
\]
with $\left\Vert X\right\Vert \le1$ such that $A^{1/2}=XB^{1/2}$
on $H$. Consequently, 
\[
ran\,(A^{1/2})\subset ran\,(B^{1/2}).
\]
\end{lem}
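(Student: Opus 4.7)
The plan is to recognize this as the Douglas factorization principle specialized to square roots and execute it by hand rather than invoking Douglas's theorem as a black box. The single analytic input will be the norm comparison implied by $0 \le A \le B$, namely
\[
\|A^{1/2} x\|^2 = \langle Ax, x \rangle \le \langle Bx, x \rangle = \|B^{1/2} x\|^2, \qquad x \in H,
\]
which in particular yields $\ker B^{1/2} \subset \ker A^{1/2}$. Once this is in hand, the rest of the argument is soft.

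First I would define $X_0 : ran\,(B^{1/2}) \to ran\,(A^{1/2})$ on the (possibly non-closed) range by the assignment $X_0(B^{1/2} x) := A^{1/2} x$. Well-definedness is immediate from the kernel containment (if $B^{1/2}x = B^{1/2}y$, then $B^{1/2}(x-y)=0$, hence $A^{1/2}(x-y) = 0$), and the norm comparison above is precisely the statement that $\|X_0 \xi\| \le \|\xi\|$ for $\xi \in ran\,(B^{1/2})$. Since $ran\,(B^{1/2})$ is dense in $\overline{ran\,(B^{1/2})}$, uniform continuity of a contraction extends $X_0$ uniquely to a contraction $X : \overline{ran\,(B^{1/2})} \to \overline{ran\,(A^{1/2})}$ with $\|X\| \le 1$. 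By construction $A^{1/2} = X B^{1/2}$ on all of $H$, since $B^{1/2}$ maps $H$ into its range, and hence into the domain of $X$.

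For the range inclusion, I would extend $X$ to $\tilde X \in B(H)$ by setting $\tilde X = 0$ on $\overline{ran\,(B^{1/2})}^\perp$, which by self-adjointness of $B^{1/2}$ coincides with $\ker B^{1/2}$. Then $\tilde X$ is a global contraction with $A^{1/2} = \tilde X B^{1/2}$, and taking adjoints, using that both square roots are self-adjoint, yields $A^{1/2} = B^{1/2} \tilde X^{*}$. This immediately gives $ran\,(A^{1/2}) \subset ran\,(B^{1/2})$. The one step that requires a moment of care is precisely this adjoint transposition: the original factorization $A^{1/2} = X B^{1/2}$ only places $A^{1/2} x$ inside $\overline{ran\,(A^{1/2})}$, not a priori inside $ran\,(B^{1/2})$, so one must flip the factorization before reading off the range inclusion. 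Apart from this small maneuver, there is no substantial obstacle, as the lemma is classical.
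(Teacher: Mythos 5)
Your proposal is correct and follows essentially the same route as the paper: define $X$ on $ran\,(B^{1/2})$ by $X(B^{1/2}x)=A^{1/2}x$, verify well-definedness and contractivity from $\langle x,Ax\rangle\le\langle x,Bx\rangle$, extend by density, and take adjoints to read off $A^{1/2}=B^{1/2}X^{*}$ and hence the range inclusion. Your extra step of extending $X$ by zero on $\ker B^{1/2}$ before transposing is a slightly more careful version of the same maneuver the paper performs implicitly.
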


\begin{proof}
For details, see \cite{MR203464}. A proof sketch is included below
for completeness.

Define $X$ on $ran\,(B^{1/2})$ by 
\[
X(B^{1/2}x):=A^{1/2}x.
\]
If $B^{1/2}x=B^{1/2}y$, then $B^{1/2}\left(x-y\right)=0$, and 
\[
0\le\left\langle x-y,A\left(x-y\right)\right\rangle \le\left\langle x-y,B\left(x-y\right)\right\rangle =0,
\]
so $A^{1/2}\left(x-y\right)=0$. Thus $A^{1/2}x=A^{1/2}y$. Thus,
$X$ is well defined. 

For all $x\in H$, 
\[
\Vert X(B^{1/2}x)\Vert^{2}=\Vert A^{1/2}x\Vert^{2}=\left\langle x,Ax\right\rangle \le\left\langle x,Bx\right\rangle =\Vert B^{1/2}x\Vert^{2},
\]
so $\left\Vert X\right\Vert \le1$ on $ran\,(B^{1/2})$. Therefore
$X$ extends uniquely to a contraction on $\overline{ran\,(B^{1/2})}$. 

Since $A^{1/2}$ is self-adjoint, 
\[
A^{1/2}=(A^{1/2})^{*}=(XB^{1/2})^{*}=B^{1/2}X^{*}.
\]
Hence $ran\,(A^{1/2})\subset ran\,(B^{1/2})$. 
\end{proof}
\begin{cor}
\label{cor:f-3}We have $ran(R^{1/2}_{\infty})\subset ran(R^{1/2}_{0})\subset H_{R_{0}}$,
and a contraction $X:H_{R_{0}}\to H_{R_{0}}$ such that $R^{1/2}_{\infty}=XR^{1/2}_{0}$.
\end{cor}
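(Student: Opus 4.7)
The plan is to read this as an essentially immediate corollary of \prettyref{lem:5-2} applied to the pair $A=R_{\infty}$, $B=R_{0}$, with the hypothesis $0\le R_{\infty}\le R_{0}$ supplied either by item (1) of \prettyref{thm:it} or by the chain \eqref{eq:f-1}. So the substantive content has already been done; what remains is just to identify the domain/codomain of the resulting operator $X$ with $H_{R_{0}}$.

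First I would invoke \prettyref{lem:5-2} with $A=R_{\infty}$ and $B=R_{0}$. This produces a contraction
\[
X_{0}:\overline{ran\,(R^{1/2}_{0})}\longrightarrow\overline{ran\,(R^{1/2}_{\infty})}
\]
satisfying $R^{1/2}_{\infty}=X_{0}R^{1/2}_{0}$ on $H$, together with the range inclusion $ran\,(R^{1/2}_{\infty})\subset ran\,(R^{1/2}_{0})$. By definition of $H_{R_{0}}$, the domain of $X_{0}$ is already $H_{R_{0}}$.

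Next I would upgrade the codomain from $\overline{ran\,(R^{1/2}_{\infty})}$ to $H_{R_{0}}$. Applying \prettyref{lem:5-2} once more (or just using $R^{1/2}_{\infty}=X_{0}R^{1/2}_{0}$ directly) gives $ran\,(R^{1/2}_{\infty})\subset ran\,(R^{1/2}_{0})$, and taking closures yields $\overline{ran\,(R^{1/2}_{\infty})}\subset\overline{ran\,(R^{1/2}_{0})}=H_{R_{0}}$. Defining $X:H_{R_{0}}\to H_{R_{0}}$ as $X_{0}$ followed by the inclusion preserves the factorization $R^{1/2}_{\infty}=XR^{1/2}_{0}$ and the contraction property. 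The two displayed range inclusions $ran\,(R^{1/2}_{\infty})\subset ran\,(R^{1/2}_{0})\subset H_{R_{0}}$ are then immediate, the second being the definition of $H_{R_{0}}$.

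There is no real obstacle here; the only thing to be careful about is ensuring that the chosen $X$ indeed maps into $H_{R_{0}}$ (not merely into $H$), which is why I would explicitly record the step $\overline{ran\,(R^{1/2}_{\infty})}\subset H_{R_{0}}$ before stating the final factorization. If one prefers a cleaner cosmetic statement, one can note that by self-adjointness, $R^{1/2}_{\infty}=R^{1/2}_{0}X^{*}$ as well, which will be convenient later when extracting the intrinsic factor $T_{\infty}$ on $H_{R_{0}}$ with $R_{\infty}=R^{1/2}_{0}T_{\infty}R^{1/2}_{0}$ and $T_{\infty}=X^{*}X$.
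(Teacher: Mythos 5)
Your proposal is correct and matches the paper's intent exactly: the corollary is meant as an immediate application of \prettyref{lem:5-2} with $A=R_{\infty}$, $B=R_{0}$, the hypothesis $0\le R_{\infty}\le R_{0}$ coming from \prettyref{thm:it}. Your extra care in checking that the codomain of $X$ lands inside $H_{R_{0}}$ (via $\overline{ran\,(R^{1/2}_{\infty})}\subset H_{R_{0}}$) is a harmless refinement of the same argument.
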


\begin{rem}
From $0\le A\le B$ we always have $\overline{ran\,(A^{1/2})}\subset\overline{ran\,(B^{1/2})}$.
\prettyref{lem:5-2} upgrades this to $ran\,(A^{1/2})\subset ran\,(B^{1/2})$
via a contraction factorization $A^{1/2}=XB^{1/2}$. We use this to
work canonically on $H_{B}=\overline{ran\,(B^{1/2})}$ and to represent
$A=B^{1/2}TB^{1/2}$ with $0\le T\le I$.
\end{rem}

\begin{prop}[Factorized comparison]
\label{prop:f-4} Let $H$ be a Hilbert space, $R_{0}\in B(H)$ be
positive, $P_{A},P_{B}$ orthogonal projections, $K=\ker P_{A}\cap\ker P_{B}$,
and $S=R_{0}|_{K}$ the shorted operator of $R_{0}$ to $K$. Set
$H_{R_{0}}=\overline{ran\,(R^{1/2}_{0})}$. Define the subspace 
\begin{equation}
M=\overline{\{u\in H_{R_{0}}:R^{1/2}_{0}u\in K\}}\subset H_{R_{0}}.\label{eq:f-2}
\end{equation}
Then:
\begin{enumerate}
\item \label{enu:5-4-1}For every $R\in B(H)$ with $0\le R\le R_{0}$,
there exists a unique positive contraction $T\in B(H_{R_{0}})$ such
that 
\[
R=R^{1/2}_{0}TR^{1/2}_{0}.
\]
\item \label{enu:5-4-2}The shorted operator admits the intrinsic form 
\[
S=R^{1/2}_{0}P_{M}R^{1/2}_{0},
\]
where $P_{M}$ is the orthogonal projection of $H_{R_{0}}$ onto $M$. 
\item \label{enu:5-4-3}If $R_{\infty}$ is the limit of the alternating
weighted-residual iteration (\prettyref{thm:it}), then 
\[
R_{\infty}=R^{1/2}_{0}T_{\infty}R^{1/2}_{0}\quad\text{with}\quad0\le T_{\infty}\le P_{M}\text{ on }H_{R_{0}}.
\]
Consequently $R_{\infty}\le S$. 
\item \label{enu:5-4-4}Equality holds if and only if the intrinsic contraction
saturates the projector, i.e., 
\[
R_{\infty}=S\iff T_{\infty}=P_{M}\text{ on }H_{R_{0}}.
\]
\end{enumerate}
\end{prop}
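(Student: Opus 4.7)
The plan is to prove the four parts in order, with (1) serving as a representation lemma for positive operators dominated by $R_0$ and (2) carrying the bulk of the work. For (1), I apply \prettyref{lem:5-2} with $A=R$, $B=R_{0}$ to obtain a contraction $X:H_{R_{0}}\to H_{R_{0}}$ with $R^{1/2}=XR_{0}^{1/2}$; setting $T:=X^{*}X$ yields a positive contraction on $H_{R_{0}}$ satisfying $R=R_{0}^{1/2}TR_{0}^{1/2}$ via self-adjointness of $R_{0}^{1/2}$. For uniqueness, if two such contractions $T_{1},T_{2}$ give the same $R$, then $R_{0}^{1/2}(T_{1}-T_{2})R_{0}^{1/2}=0$; pairing against arbitrary $x,y\in H$ places $(T_{1}-T_{2})R_{0}^{1/2}x$ orthogonal to $ran\,(R_{0}^{1/2})$, and since this vector also lies in $H_{R_{0}}=\overline{ran\,(R_{0}^{1/2})}$ it must vanish. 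Density of $ran\,(R_{0}^{1/2})$ in $H_{R_{0}}$ then gives $T_{1}=T_{2}$ on $H_{R_{0}}$.

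Part (2) is the core. First I observe that the defining set for $M$ is already closed: if $u_{n}\to u$ in $H_{R_{0}}$ with $R_{0}^{1/2}u_{n}\in K$, then continuity of $R_{0}^{1/2}$ and closedness of $K$ give $R_{0}^{1/2}u\in K$. Now take any $X$ with $0\le X\le R_{0}$ and $ran\,(X)\subset K$, and write $X=R_{0}^{1/2}TR_{0}^{1/2}$ via (1). The range condition says $R_{0}^{1/2}(TR_{0}^{1/2}y)\in K$ for every $y\in H$, i.e.\ $TR_{0}^{1/2}y\in M$. Continuity of $T$ together with density of $ran\,(R_{0}^{1/2})$ in $H_{R_{0}}$ and closedness of $M$ upgrade this to $T(H_{R_{0}})\subset M$, whence $T=P_{M}T$; combined with self-adjointness of $T$ this gives $T=P_{M}TP_{M}$, and together with $0\le T\le I$ on $H_{R_{0}}$ forces $T\le P_{M}$. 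Conversely $P_{M}$ itself satisfies the range condition by the very definition of $M$, so it is the maximum, and monotonicity of the sandwich $T\mapsto R_{0}^{1/2}TR_{0}^{1/2}$ yields $S=R_{0}^{1/2}P_{M}R_{0}^{1/2}$.

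For (3), apply (1) to $R_{\infty}$ (using $R_{\infty}\le R_{0}$ from \prettyref{thm:it}) to write $R_{\infty}=R_{0}^{1/2}T_{\infty}R_{0}^{1/2}$; the range condition $ran\,(R_{\infty})\subset K$ from the same theorem places $T_{\infty}$ in the class of (2), yielding $T_{\infty}\le P_{M}$ on $H_{R_{0}}$ and hence $R_{\infty}\le S$ by monotonicity of the sandwich. Part (4) is then immediate from the uniqueness in (1): equality $R_{\infty}=S$ reads $R_{0}^{1/2}T_{\infty}R_{0}^{1/2}=R_{0}^{1/2}P_{M}R_{0}^{1/2}$, forcing $T_{\infty}=P_{M}$ on $H_{R_{0}}$, while the converse is trivial.

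The main technical obstacle is the translation in part (2) from the external constraint $ran\,(X)\subset K$ to the intrinsic identity $T=P_{M}TP_{M}$: one must be careful that $T$ acts on $H_{R_{0}}$ while the original condition lives on $H$, and the passage relies on continuity of $T$, density of $ran\,(R_{0}^{1/2})$ in $H_{R_{0}}$, and closedness of both $K$ and $M$. Once this translation is in place, the remaining arguments are algebraic consequences of (1).
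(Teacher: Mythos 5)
Your proposal is correct and follows essentially the same route as the paper: Douglas factorization (\prettyref{lem:5-2}) with $T=X^{*}X$ for part (1), the two-sided admissibility/maximality argument identifying $S$ with $R_{0}^{1/2}P_{M}R_{0}^{1/2}$ for part (2), and the range condition on $R_{\infty}$ plus uniqueness of the intrinsic contraction for parts (3) and (4). Your explicit observation that the set defining $M$ is already closed, and your spelled-out uniqueness argument, are correct refinements of details the paper leaves implicit.
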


\begin{proof}
Fix $R$ with $0\le R\le R_{0}$. By \prettyref{lem:5-2}, there exists
a contraction $X:H_{R_{0}}\rightarrow H_{R_{0}}$ with $R^{1/2}=XR^{1/2}_{0}$.
Set $T:=X^{*}X$ acting on $H_{R_{0}}$. Then $T\ge0$ and $\|T\|\le\|X\|^{2}\le1$.
For any $x\in H$, 
\[
\left\langle x,Rx\right\rangle =\Vert R^{1/2}x\Vert^{2}=\Vert XR^{1/2}_{0}x\Vert^{2}=\langle R^{1/2}_{0}x,TR^{1/2}_{0}x\rangle.
\]
Uniqueness follows from the density of $ran\,(R^{1/2}_{0})$ in $H_{R_{0}}$.
This proves \eqref{enu:5-4-1}.

Let $S':=R^{1/2}_{0}P_{M}R^{1/2}_{0}$. Since $P_{M}\le I$, we have
$0\le S'\le R_{0}$. Moreover, if $x\in H$ and $u=R^{1/2}_{0}x\in H_{R_{0}}$,
then $S'x=R^{1/2}_{0}P_{M}u$, and the definition of $M$ ensures
that $R^{1/2}_{0}P_{M}u\in K$. Thus $ran\left(S'\right)\subset K$,
and $S'$ is admissible for the shorting problem. 

On the other hand, if $R$ is any positive operator with $0\le R\le R_{0}$
and $ran\left(R\right)\subset K$, part \eqref{enu:5-4-1} gives a
unique positive contraction $T\in B\left(H_{R_{0}}\right)$ such that
$R=R^{1/2}_{0}TR^{1/2}_{0}$. The range condition $ran\left(R\right)\subset K$
implies $R^{1/2}_{0}Tu\in K$ for all $u\in H_{R_{0}}$, hence $Tu\in M$,
and therefore 
\[
ran\left(T\right)\subset M,\qquad T=P_{M}TP_{M}.
\]
Since $T$ is a contraction, it follows that $0\le T\le P_{M}$. For
all $x\in H$, 
\[
\left\langle x,Rx\right\rangle =\langle R^{1/2}_{0}x,TR^{1/2}_{0}x\rangle\le\langle R^{1/2}_{0}x,P_{M}R^{1/2}_{0}x\rangle=\left\langle x,S'x\right\rangle ,
\]
so $R\le S'$. As the shorted operator $S=R_{0}|_{K}$ is the maximal
such $R$, we conclude $S=S'$. This is part \eqref{enu:5-4-2}. 

From \prettyref{thm:it}, $0\le R_{\infty}\le R_{0}$ and $ran\left(R_{\infty}\right)\subset K$.
By \eqref{enu:5-4-1}, $R_{\infty}=R^{1/2}_{0}T_{\infty}R^{1/2}_{0}$
for a unique positive contraction $T_{\infty}$. Since $ran\left(R_{\infty}\right)\subset K$,
$T_{\infty}$ maps the dense subspace $ran\,(R^{1/2}_{0})$ into $M$.
By continuity, $ran\left(T_{\infty}\right)\subset M$. Since $T_{\infty}\ge0$
and $ran\left(T_{\infty}\right)\subset M$, we have $T_{\infty}=P_{M}T_{\infty}P_{M}$.
Finally, since $\|T_{\infty}\|\le1$, we have $T_{\infty}\le P_{M}$.
Parts \eqref{enu:5-4-3} and \eqref{enu:5-4-4} follow from this.
\end{proof}
\begin{cor}
With the notation above, the following are equivalent:
\begin{enumerate}
\item \label{enu:5-5-1}$S=R_{0}|_{K}=0$. 
\item \label{enu:5-5-2}$M=\left\{ 0\right\} $, equivalently $P_{M}=0$
on $H_{R_{0}}$. 
\item \label{enu:5-5-3}There is no nonzero vector $u\in H_{R_{0}}$ such
that $R^{1/2}_{0}u$ lies in $K={\rm ker}P_{A}\cap{\rm ker}P_{B}$. 
\end{enumerate}
In this case $R_{\infty}=0$. In general, the converse implication
$R_{\infty}=0\Rightarrow S=0$ need not hold.

\end{cor}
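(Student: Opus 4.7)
The plan is to chain the three equivalences directly off \prettyref{prop:f-4} and the defining formula \eqref{eq:f-2} for $M$, and to defer the failure of the converse to the $2\times 2$ example of \prettyref{sec:4}. For \eqref{enu:5-5-1} $\Leftrightarrow$ \eqref{enu:5-5-2}, I would use the intrinsic representation $S=R^{1/2}_{0}P_{M}R^{1/2}_{0}$ from \prettyref{prop:f-4}\eqref{enu:5-4-2}. The direction $P_{M}=0\Rightarrow S=0$ is immediate. Conversely, if $S=0$, then for every $x\in H$, $\|P_{M}R^{1/2}_{0}x\|^{2}=\langle x,Sx\rangle=0$, so $P_{M}R^{1/2}_{0}=0$ on $H$. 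Density of $ran\,(R^{1/2}_{0})$ in $H_{R_{0}}$ (by the very definition of $H_{R_{0}}$), together with boundedness of $P_{M}$, then forces $P_{M}=0$ on $H_{R_{0}}$, i.e., $M=\{0\}$.

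For \eqref{enu:5-5-2} $\Leftrightarrow$ \eqref{enu:5-5-3}, I would unpack \eqref{eq:f-2}: the set $\mathcal{N}:=\{u\in H_{R_{0}}:R^{1/2}_{0}u\in K\}$ is a linear subspace of $H_{R_{0}}$ and $M=\overline{\mathcal{N}}$, so $M=\{0\}$ if and only if $\mathcal{N}=\{0\}$, which is precisely condition \eqref{enu:5-5-3}. Granting the three equivalences, the claim $R_{\infty}=0$ under any (hence all) of them follows at once from \prettyref{prop:f-4}\eqref{enu:5-4-3}: $0\le R_{\infty}\le S=0$.

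For the failure of the converse I would simply invoke the finite-dimensional example of \prettyref{sec:4}, in which $S$ is the rank-one operator $\mathrm{diag}(0,0.2)\neq0$ while $R_{\infty}=0$; in the notation of \prettyref{prop:f-4}, this is the case $P_{M}\neq0$ but $T_{\infty}=0$, so the defect $G=P_{M}-T_{\infty}$ is a full projection. No nontrivial obstacle arises anywhere in the proof; the only point requiring a moment of care is the density argument used to promote $P_{M}R^{1/2}_{0}=0$ to $P_{M}=0$ on $H_{R_{0}}$, and the rest is direct bookkeeping on top of \prettyref{prop:f-4}.
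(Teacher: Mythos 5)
Your proposal is correct and follows essentially the same route as the paper: both reduce (1)$\Leftrightarrow$(2) to the intrinsic form $S=R^{1/2}_{0}P_{M}R^{1/2}_{0}$ (the paper phrases the upgrade from $R^{1/2}_{0}P_{M}R^{1/2}_{0}=0$ to $P_{M}=0$ via injectivity of $R^{1/2}_{0}$ on $H_{R_{0}}$, you via the projection identity $\langle x,Sx\rangle=\Vert P_{M}R^{1/2}_{0}x\Vert^{2}$ plus density of $ran\,(R^{1/2}_{0})$ --- an equivalent amount of work), treat (2)$\Leftrightarrow$(3) as unwinding the definition of $M$, deduce $R_{\infty}=0$ from $0\le R_{\infty}\le S$, and cite the Section~\ref{sec:4} example for the failure of the converse.
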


\begin{proof}
Recall that $S=R_{0}|_{K}$ admits the intrinsic form 
\[
S=R^{1/2}_{0}P_{M}R^{1/2}_{0},
\]
where $M\subset H_{R_{0}}$ is the closed subspace in \eqref{eq:f-2}
and $P_{M}$ is the orthogonal projection onto $M$. Since the restriction
of $R^{1/2}_{0}$ to $H_{R_{0}}$ is injective, we have 
\[
S=0\quad\Longleftrightarrow\quad P_{M}=0\quad\Longleftrightarrow\quad M=\left\{ 0\right\} ,
\]
which gives the equivalence of \eqref{enu:5-5-1} and \eqref{enu:5-5-2}.

For the equivalence of \eqref{enu:5-5-2} and \eqref{enu:5-5-3},
note that by definition $M=\left\{ 0\right\} $ holds if and only
if the only vector $u\in H_{R_{0}}$ with $R^{1/2}_{0}u\in K$ is
$u=0$, which is exactly condition \eqref{enu:5-5-3}.

Finally, if $S=0$, then $S$ is the maximal positive operator dominated
by $R_{0}$ with range in $K$, so every such operator must vanish.
In particular, $0\le R_{\infty}\le S=0$ implies $R_{\infty}=0$.
On the other hand, the finite-dimensional example in \prettyref{sec:4}
shows that $R_{\infty}=0$ while $S\neq0$ may occur, so the converse
implication does not hold in general. 
\end{proof}
To express the dynamics intrinsically, we use the factorization $R=R^{1/2}_{0}TR^{1/2}_{0}$
from \prettyref{prop:f-4} to pull the residual maps back to the subspace
$H_{R_{0}}$. This induces a corresponding sequence of contractions
starting from the identity, which encodes the convergence of the original
flow. 
\begin{prop}[Intrinsic residual maps]
 For $P\in\left\{ P_{A},P_{B}\right\} $ and $T\in B\left(H_{R_{0}}\right)$
with $0\le T\le I$, define $\Psi_{P}\left(T\right)$ by 
\[
\Phi_{P}(R^{1/2}_{0}TR^{1/2}_{0})=R^{1/2}_{0}\Psi_{P}\left(T\right)R^{1/2}_{0},
\]
where $\Phi_{P}\left(R\right)=R^{1/2}\left(I-P\right)R^{1/2}$ is
the weighted-residual map. Then:
\begin{enumerate}
\item \label{enu:5-6-1}$0\le\Psi_{P}\left(T\right)\le T$ for all $0\le T\le I$. 
\item \label{enu:5-6-2}If $0\le T\le P_{M}$, then $\Psi_{P}\left(T\right)=T$. 
\end{enumerate}
In particular, if we set $T_{0}=I$ on $H_{R_{0}}$ and define 
\[
T_{n+1}=\begin{cases}
\Psi_{P_{B}}\left(T_{n}\right), & n\text{ even},\\
\Psi_{P_{A}}\left(T_{n}\right), & n\text{ odd},
\end{cases}
\]
then $\left(T_{n}\right)$ is decreasing in the Loewner order, $0\le T_{n}\le I$
for all $n$, and $T_{n}\stackrel{s}{\longrightarrow}T_{\infty}$
for a positive contraction $T_{\infty}$ on $H_{R_{0}}$ with 
\[
R_{\infty}=R^{1/2}_{0}T_{\infty}R^{1/2}_{0}.
\]

\end{prop}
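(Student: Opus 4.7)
The plan is to derive everything from the factorization in Proposition~\ref{prop:f-4}(1) together with the basic inequality $0 \le \Phi_P(R) \le R$ from \eqref{eq:a-1}. Well-definedness of $\Psi_P$ comes first: given $T\in B(H_{R_0})$ with $0 \le T \le I$, I would set $R := R^{1/2}_0 T R^{1/2}_0$, so $0 \le R \le R_0$ and consequently $0 \le \Phi_P(R) \le R_0$. Proposition~\ref{prop:f-4}(1) then provides a unique positive contraction $T' \in B(H_{R_0})$ with $\Phi_P(R) = R^{1/2}_0 T' R^{1/2}_0$, and I would define $\Psi_P(T) := T'$.

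For part (1), the inequality $\Phi_P(R) \le R$ passes to $\Psi_P(T) \le T$ via the quadratic-form identity $\langle x, R x \rangle = \langle R^{1/2}_0 x, T R^{1/2}_0 x \rangle$ together with the density of $ran\,(R^{1/2}_0)$ in $H_{R_0}$; positivity of $\Psi_P(T)$ is built into the construction. For part (2), I would show that $0 \le T \le P_M$ forces $ran\,(R^{1/2}) \subset K$, after which $P R^{1/2} = 0$ gives $\Phi_P(R) = R - R^{1/2}PR^{1/2} = R$, and uniqueness of the factorization then yields $\Psi_P(T) = T$. To establish the range condition, $T \le P_M$ with $T \ge 0$ forces $ran\,(T) \subset M$; the defining property of $M$ in \eqref{eq:f-2} combined with continuity of $R^{1/2}_0$ gives $R^{1/2}_0 T R^{1/2}_0 x \in K$ for every $x \in H$, so $ran\,(R) \subset K$, and since $K$ is closed and $\overline{ran\,(R^{1/2})} = \overline{ran\,(R)}$, this upgrades to $ran\,(R^{1/2}) \subset K \subset \ker P$.

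For the iteration statement I would identify the intrinsic sequence with the factors of the operators from Theorem~\ref{thm:it}. By Proposition~\ref{prop:f-4}(1), each $R_n$ factors uniquely as $R_n = R^{1/2}_0 T_n R^{1/2}_0$ with $T_n$ a positive contraction on $H_{R_0}$; the base case $R_0 = R^{1/2}_0 \cdot I \cdot R^{1/2}_0$ identifies $T_0 = I$ on $H_{R_0}$, and unwrapping $R_{n+1} = \Phi_{P_n}(R_n)$ through the definition of $\Psi_{P_n}$ gives $T_{n+1} = \Psi_{P_n}(T_n)$, so the intrinsic sequence in the statement agrees with these factors. Part (1) then makes $(T_n)$ monotone decreasing in $[0, I]$, hence strongly convergent to some positive contraction $T_\infty \in B(H_{R_0})$ by the same monotone convergence fact used in Theorem~\ref{thm:it}. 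Passing to the limit in the scalar identity $\langle x, R_n x \rangle = \langle R^{1/2}_0 x, T_n R^{1/2}_0 x \rangle$, with $R_n \to R_\infty$ strongly on the left and $T_n \to T_\infty$ strongly on the right, yields $R_\infty = R^{1/2}_0 T_\infty R^{1/2}_0$. The main obstacle I anticipate is the range-lifting inside part (2): one must promote $ran\,(R) \subset K$ to $ran\,(R^{1/2}) \subset K$ so that the residual map genuinely fixes $R$, and it is the closedness of $K$ together with the standard identity $\overline{ran\,(R^{1/2})} = \overline{ran\,(R)}$ that makes this step go through.
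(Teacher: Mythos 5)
Your proposal is correct and follows essentially the same route as the paper: well-definedness and part (1) via the unique factorization of Proposition~\ref{prop:f-4}(1) together with $0\le\Phi_P(R)\le R$ and injectivity/density on $H_{R_0}$, part (2) via $ran\,(T)\subset M$ and the definition of $M$, and the iteration via identifying $(T_n)$ with the intrinsic factors of the sequence from Theorem~\ref{thm:it}. If anything, you are more careful than the paper at one point: the paper passes from $ran\,(R)\subset K$ directly to $\Phi_P(R)=R$, whereas you make explicit the needed upgrade to $ran\,(R^{1/2})\subset K$ using $\overline{ran\,(R^{1/2})}=\overline{ran\,(R)}$ and the closedness of $K$.
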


\begin{proof}
Let $T$ satisfy $0\le T\le I$ and set $R=R^{1/2}_{0}TR^{1/2}_{0}$.
Then $0\le R\le R_{0}$. The weighted-residual map gives 
\[
R'=\Phi_{P}\left(R\right)=R^{1/2}\left(I-P\right)R^{1/2}\ge0,\qquad R'\le R\le R_{0}.
\]
By \prettyref{prop:f-4}, there exists a unique positive contraction
$\Psi_{P}\left(T\right)$ on $H_{R_{0}}$ such that $R'=R^{1/2}_{0}\Psi_{P}\left(T\right)R^{1/2}_{0}$.
Since $R'\le R$, injectivity of $R^{1/2}_{0}$ on $H_{R_{0}}$ implies
$\Psi_{P}\left(T\right)\le T$, and clearly $0\le\Psi_{P}\left(T\right)\le I$.
This proves \eqref{enu:5-6-1}.

If $0\le T\le P_{M}$, then $ran\left(T\right)\subset M$ by \prettyref{lem:5-2}.
Hence $ran\left(R\right)=ran\,(R^{1/2}_{0}TR^{1/2}_{0})\subset K$
by the definition of $M$. On $K$ we have $\left(I-P\right)$ equal
to the identity, so 
\[
\Phi_{P}\left(R\right)=R^{1/2}\left(I-P\right)R^{1/2}=R.
\]
By the intrinsic factorization, this is equivalent to $\Psi_{P}\left(T\right)=T$.
This proves \eqref{enu:5-6-2}.

For the iteration, define $T_{0}=I$ and $T_{n+1}$ as above. By \eqref{enu:5-6-1}
we have $0\le T_{n+1}\le T_{n}\le I$ for all $n$, so $\left(T_{n}\right)$
is a bounded decreasing sequence of positive operators on $H_{R_{0}}$.
It follows that $T_{n}\stackrel{s}{\longrightarrow}T_{\infty}$ for
some positive contraction $T_{\infty}$. Setting $R_{n}=R^{1/2}_{0}T_{n}R^{1/2}_{0}$,
the defining relation for $\Psi_{P}$ shows that $\left(R_{n}\right)$
coincides with the alternating weighted-residual sequence from \prettyref{thm:it},
so $R_{n}\stackrel{s}{\longrightarrow}R_{\infty}$. Passing to the
limit in $R_{n}=R^{1/2}_{0}T_{n}R^{1/2}_{0}$ yields 
\[
R_{\infty}=R^{1/2}_{0}T_{\infty}R^{1/2}_{0},
\]
as claimed. 
\end{proof}
\begin{prop}[Gap localization]
 \label{prop:5-7} Set 
\[
G:=P_{M}-T_{\infty}\quad\text{on }H_{R_{0}}.
\]
Then $G\ge0$, $ran\left(G\right)\subset M$, and 
\[
S-R_{\infty}=R^{1/2}_{0}GR^{1/2}_{0}.
\]
In particular, $S=R_{\infty}$ if and only if $G=0$. 
\end{prop}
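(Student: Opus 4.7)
The plan is to harvest each of the three conclusions directly from the factorized comparison in \prettyref{prop:f-4}, which already provides $S=R^{1/2}_{0}P_{M}R^{1/2}_{0}$ and $R_{\infty}=R^{1/2}_{0}T_{\infty}R^{1/2}_{0}$ together with $0\le T_{\infty}\le P_{M}$ on $H_{R_{0}}$. No new dynamical input is required: everything reduces to linear algebra on $H_{R_{0}}$ combined with the injectivity of $R^{1/2}_{0}$ on $H_{R_{0}}$.

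First, positivity of $G$ follows at once from $T_{\infty}\le P_{M}$. Second, the identity $S-R_{\infty}=R^{1/2}_{0}GR^{1/2}_{0}$ is just the subtraction of the two factorizations above, using linearity of the sandwich map. Third, for the range inclusion $ran\left(G\right)\subset M$, I would use the observation established in the proof of \prettyref{prop:f-4} that $ran\left(T_{\infty}\right)\subset M$ and hence $T_{\infty}=P_{M}T_{\infty}P_{M}$. Writing $G=P_{M}-T_{\infty}$ and evaluating on vectors in $H_{R_{0}}$ orthogonal to $M$ shows that $G$ vanishes there, so $G=GP_{M}$; self-adjointness of $G$ then upgrades this to $G=P_{M}GP_{M}$, giving $ran\left(G\right)\subset M$.

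For the final equivalence, the direction $G=0\Rightarrow S=R_{\infty}$ is immediate from the identity. For the converse, from $R^{1/2}_{0}GR^{1/2}_{0}=0$ and $G\ge0$ on $H_{R_{0}}$, I would pass to $\Vert G^{1/2}R^{1/2}_{0}x\Vert^{2}=0$ for every $x\in H$, deduce that $G^{1/2}$ vanishes on $ran\,(R^{1/2}_{0})$, and invoke density of this subspace in $H_{R_{0}}$ to conclude $G^{1/2}=0$, hence $G=0$, on $H_{R_{0}}$. No step presents a genuine obstacle; the only point requiring care is that $G$ is defined only on $H_{R_{0}}$, which is why the density of $ran\,(R^{1/2}_{0})$ in $H_{R_{0}}$ (equivalently, $\ker R^{1/2}_{0}=H_{R_{0}}^{\perp}$) is exactly the right tool to cancel the outer factors.
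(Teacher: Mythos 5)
Your proposal is correct and follows essentially the same route as the paper: read off $G\ge0$ and the factorized identity from \prettyref{prop:f-4}, and cancel the outer factors $R^{1/2}_{0}$ using density of $ran\,(R^{1/2}_{0})$ in $H_{R_{0}}$ together with positivity of $G$. The only cosmetic differences are that the paper deduces $ran\left(G\right)\subset M$ directly from $0\le G\le P_{M}$ rather than from $T_{\infty}=P_{M}T_{\infty}P_{M}$, and argues via the quadratic form $\left\langle y,Gy\right\rangle$ instead of $G^{1/2}$; both variants are equivalent.
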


\begin{proof}
By the factorized comparison in \prettyref{prop:f-4}, we have $0\le T_{\infty}\le P_{M}$
on $H_{R_{0}}$. Hence $G=P_{M}-T_{\infty}\ge0$ and $ran\left(G\right)\subset ran\left(P_{M}\right)=M$.
Using the intrinsic factorizations 
\[
S=R^{1/2}_{0}P_{M}R^{1/2}_{0},\qquad R_{\infty}=R^{1/2}_{0}T_{\infty}R^{1/2}_{0},
\]
we obtain 
\[
S-R_{\infty}=R^{1/2}_{0}\left(P_{M}-T_{\infty}\right)R^{1/2}_{0}=R^{1/2}_{0}GR^{1/2}_{0}.
\]
If $S=R_{\infty}$, then $R^{1/2}_{0}GR^{1/2}_{0}=0$. For any $x\in H$,
\[
\langle R^{1/2}_{0}x,GR^{1/2}_{0}x\rangle=\langle x,R^{1/2}_{0}GR^{1/2}_{0}x\rangle=0.
\]
Thus $\left\langle y,Gy\right\rangle =0$ for all $y$ in the dense
subspace $ran\,(R^{1/2}_{0})\subset H_{R_{0}}$. By continuity and
the fact that $G\ge0$, this extends to all $y\in H_{R_{0}}$, so
$Gy=0$ for every $y\in H_{R_{0}}$, i.e. $G=0$. The converse is
immediate from $S-R_{\infty}=R^{1/2}_{0}GR^{1/2}_{0}$. 
\end{proof}
\begin{cor}[Kernel and support comparison]
 \label{cor:5-8} We have $\ker S\subset\ker R_{\infty}$ and $s\left(R_{\infty}\right)\le s\left(S\right)$,
where $s\left(X\right)$ denotes the support projection of a positive
operator $X$. 
\end{cor}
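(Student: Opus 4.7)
The plan is to derive both assertions directly from the Loewner inequality $0\le R_{\infty}\le S$, which is recorded in \prettyref{prop:f-4} and reaffirmed by the identity $S-R_{\infty}=R^{1/2}_{0}GR^{1/2}_{0}\ge0$ of \prettyref{prop:5-7}. No further information about the dynamics or the geometry of $M$ is required; the corollary is purely an order-theoretic consequence of a sandwich that is already in hand.

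For the kernel inclusion, I would fix $x\in\ker S$ and use the sandwich $0\le R_{\infty}\le S$ to bound
\[
0\le\left\langle x,R_{\infty}x\right\rangle \le\left\langle x,Sx\right\rangle =0.
\]
Positivity of $R_{\infty}$ then forces $R^{1/2}_{\infty}x=0$ and hence $R_{\infty}x=0$. This is the standard implication that $0\le A\le B$ yields $\ker B\subset\ker A$, applied with $A=R_{\infty}$ and $B=S$.

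For the support-projection inequality, I would recall that the support projection $s\left(X\right)$ of a positive operator $X$ is by definition the orthogonal projection onto $\left(\ker X\right)^{\perp}=\overline{ran\left(X\right)}$. Taking orthogonal complements in the inclusion $\ker S\subset\ker R_{\infty}$ gives $\overline{ran\left(R_{\infty}\right)}\subset\overline{ran\left(S\right)}$, i.e.\ $ran\left(s\left(R_{\infty}\right)\right)\subset ran\left(s\left(S\right)\right)$. Under the standard bijection between closed subspaces of $H$ and orthogonal projections on $H$, this subspace inclusion is equivalent to $s\left(R_{\infty}\right)\le s\left(S\right)$ in the Loewner (equivalently, the projection) order.

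Since the argument reduces to elementary order properties of positive operators and their support projections, I anticipate no serious obstacle; the only point worth stating carefully is that passing to orthogonal complements reverses inclusions, so the correct direction $\ker S\subset\ker R_{\infty}$ yields $s\left(R_{\infty}\right)\le s\left(S\right)$ and not the reverse.
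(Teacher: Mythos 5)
Your proposal is correct and follows essentially the same route as the paper: both derive $\ker S\subset\ker R_{\infty}$ from the quadratic-form sandwich $0\le\langle x,R_{\infty}x\rangle\le\langle x,Sx\rangle$ and then pass to orthogonal complements to compare support projections. The only difference is that you spell out the intermediate step $R^{1/2}_{\infty}x=0$, which the paper leaves implicit.
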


\begin{proof}
From $0\le R_{\infty}\le S$ we get, for any $x\in H$, 
\[
\left\langle x,Sx\right\rangle =0\implies\left\langle x,R_{\infty}x\right\rangle \le\left\langle x,Sx\right\rangle =0\implies\left\langle x,R_{\infty}x\right\rangle =0,
\]
so $\ker S\subset\ker R_{\infty}$. Taking orthogonal complements,
$\overline{ran\left(R_{\infty}\right)}\subset\overline{ran\left(S\right)}$,
i.e. $s\left(R_{\infty}\right)\le s\left(S\right)$. 
\end{proof}
\begin{prop}[Commuting case]
 \label{prop:5-9} Assume $R_{0}$, $P_{A}$ and $P_{B}$ all commute.
Then 
\[
R_{\infty}=S=R_{0}|_{K}.
\]
Equivalently, on $H_{R_{0}}$ one has $T_{\infty}=P_{M}$. 
\end{prop}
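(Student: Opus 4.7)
The plan is to exploit the commuting hypothesis to linearize the weighted-residual map on the orbit. If $R\ge0$ commutes with a projection $P$, then $R^{1/2}$ (which lies in the von Neumann algebra generated by $R$) also commutes with $P$, so
\[
\Phi_{P}(R) = R^{1/2}(I-P)R^{1/2} = (I-P)R.
\]
Moreover, $\Phi_{P}(R)$ continues to commute with any projection that commutes with both $R$ and $P$, so joint commutativity of $R_n$ with $P_{A}$ and $P_{B}$ is preserved along the iteration.

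First I would check by induction that each $R_n$ commutes with $P_{A}$ and $P_{B}$: this holds for $R_{0}$ by hypothesis, and if it holds at stage $n$ then $R_{n+1}=(I-P_{\ast})R_n$ inherits the property, using that $P_{A}$ and $P_{B}$ commute with each other. The iteration therefore collapses to repeated multiplication by $(I-P_{A})$ and $(I-P_{B})$, and combining idempotency $(I-P_{\ast})^{2}=I-P_{\ast}$ with commutativity gives
\[
R_{1}=(I-P_{B})R_{0}, \qquad R_{n}=(I-P_{A})(I-P_{B})R_{0} \quad \text{for all } n\ge 2.
\]
Since $P_{A}$ and $P_{B}$ commute, $(I-P_{A})(I-P_{B})$ is the orthogonal projection $P_{K}$ onto $K=\ker P_{A}\cap\ker P_{B}$, so the sequence stabilizes and $R_{\infty}=P_{K}R_{0}$.

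To identify $R_{\infty}$ with the shorted operator, I would invoke the intrinsic formula $S=R_{0}^{1/2}P_{M}R_{0}^{1/2}$ from \prettyref{prop:f-4}. In the commuting case, $R_{0}^{1/2}$ commutes with $P_{K}$ and hence preserves $H_{R_{0}}$, and using the injectivity of $R_{0}^{1/2}$ on $H_{R_{0}}$ one verifies that
\[
\{u\in H_{R_{0}} : R_{0}^{1/2}u\in K\} = K\cap H_{R_{0}},
\]
which is already closed; thus $M=K\cap H_{R_{0}}$ and $P_{M}$ equals the restriction of $P_{K}$ to $H_{R_{0}}$. It follows that $S=R_{0}^{1/2}P_{K}R_{0}^{1/2}=P_{K}R_{0}=R_{\infty}$, and the uniqueness of the factorization in \prettyref{prop:f-4} yields the equivalent identity $T_{\infty}=P_{M}$.

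The main obstacle is essentially notational rather than conceptual: commutativity trivializes the nonlinear dynamics by collapsing each $\Phi_{P}$ to ordinary multiplication by $I-P$, after which the two-step cycle stabilizes immediately at $P_{K}R_{0}$. The only technical point requiring care is that taking the square root preserves commutativity, which is immediate from the functional calculus since $R^{1/2}$ lies in the bicommutant of $R$.
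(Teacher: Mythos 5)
Your proposal is correct and follows essentially the same route as the paper: commutativity collapses $\Phi_{P}$ to multiplication by $I-P$, the iteration stabilizes at $P_{K}R_{0}$ after two steps, and $(I-P_{A})(I-P_{B})=P_{K}$ identifies the limit with the shorted operator. Your verification that $M=K\cap H_{R_{0}}$ and $P_{M}=P_{K}|_{H_{R_{0}}}$ is a slightly more careful justification of the final identification $P_{K}R_{0}=R_{0}|_{K}=S$, which the paper simply asserts.
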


\begin{proof}
If $R_{0}$ commutes with a projection $P$, then $R^{1/2}_{0}$ also
commutes with $P$ by functional calculus. For any positive $R$ that
commutes with $P$, 
\[
\Phi_{P}\left(R\right)=R^{1/2}\left(I-P\right)R^{1/2}=R\left(I-P\right).
\]
Now start from $R_{0}$ and apply the alternating updates using the
commutation $P_{A}P_{B}=P_{B}P_{A}$: 
\begin{align*}
R_{1} & =\Phi_{P_{B}}\left(R_{0}\right)=R_{0}\left(I-P_{B}\right),\\
R_{2} & =\Phi_{P_{A}}\left(R_{1}\right)=R_{1}\left(I-P_{A}\right)=R_{0}\left(I-P_{B}\right)\left(I-P_{A}\right).
\end{align*}
A further step yields 
\begin{align*}
R_{3} & =\Phi_{P_{B}}\left(R_{2}\right)=R_{2}\left(I-P_{B}\right)\\
 & =R_{0}\left(I-P_{B}\right)\left(I-P_{A}\right)\left(I-P_{B}\right)=R_{0}\left(I-P_{B}\right)\left(I-P_{A}\right),
\end{align*}
using idempotence $\left(I-P_{B}\right)^{2}=\left(I-P_{B}\right)$
and commutativity. Thus the sequence stabilizes at 
\[
R_{\infty}=R_{0}\left(I-P_{B}\right)\left(I-P_{A}\right).
\]

For commuting projections, $\left(I-P_{B}\right)\left(I-P_{A}\right)=I-P_{A}-P_{B}+P_{A}P_{B}=P_{K}$,
the orthogonal projector onto $K={\rm ker}P_{A}\cap{\rm ker}P_{B}$.
Hence 
\[
R_{\infty}=R_{0}P_{K}=R_{0}|_{K}=S.
\]
Finally, passing to the intrinsic form via $R^{1/2}_{0}$ gives $T_{\infty}=P_{M}$
on $H_{R_{0}}$. 
\end{proof}

\bibliographystyle{amsalpha}
\bibliography{ref}

\providecommand{\bysame}{\leavevmode\hbox to3em{\hrulefill}\thinspace}
\providecommand{\MR}{\relax\ifhmode\unskip\space\fi MR }
\providecommand{\MRhref}[2]{%
  \href{http://www.ams.org/mathscinet-getitem?mr=#1}{#2}
}
\providecommand{\href}[2]{#2}
\begin{thebibliography}{Kre47b}

\bibitem[ACS06]{MR2234254}
Jorge Antezana, Gustavo Corach, and Demetrio Stojanoff, \emph{Spectral shorted
  operators}, Integral Equations Operator Theory \textbf{55} (2006), no.~2,
  169--188. \MR{2234254}

\bibitem[AD69]{MR242573}
W.~N. Anderson, Jr. and R.~J. Duffin, \emph{Series and parallel addition of
  matrices}, J. Math. Anal. Appl. \textbf{26} (1969), 576--594. \MR{242573}

\bibitem[And71]{MR287970}
William~N. Anderson, Jr., \emph{Shorted operators}, SIAM J. Appl. Math.
  \textbf{20} (1971), 520--525. \MR{287970}

\bibitem[AT75]{MR356949}
W.~N. Anderson, Jr. and G.~E. Trapp, \emph{Shorted operators. {II}}, SIAM J.
  Appl. Math. \textbf{28} (1975), 60--71. \MR{356949}

\bibitem[Dou66]{MR203464}
R.~G. Douglas, \emph{On majorization, factorization, and range inclusion of
  operators on {H}ilbert space}, Proc. Amer. Math. Soc. \textbf{17} (1966),
  413--415. \MR{203464}

\bibitem[FW71]{MR293441}
P.~A. Fillmore and J.~P. Williams, \emph{On operator ranges}, Advances in Math.
  \textbf{7} (1971), 254--281. \MR{293441}

\bibitem[Hal69]{MR251519}
P.~R. Halmos, \emph{Two subspaces}, Trans. Amer. Math. Soc. \textbf{144}
  (1969), 381--389. \MR{251519}

\bibitem[KA80]{MR563399}
Fumio Kubo and Tsuyoshi Ando, \emph{Means of positive linear operators}, Math.
  Ann. \textbf{246} (1979/80), no.~3, 205--224. \MR{563399}

\bibitem[Kre47a]{MR24574}
M.~Krein, \emph{The theory of self-adjoint extensions of semi-bounded
  {H}ermitian transformations and its applications. {I}}, Rec. Math. [Mat.
  Sbornik] N.S. \textbf{20(62)} (1947), 431--495. \MR{24574}

\bibitem[Kre47b]{MR24575}
M.~G. Kre\u{\i}n, \emph{The theory of self-adjoint extensions of semi-bounded
  {H}ermitian transformations and its applications. {II}}, Mat. Sbornik N.S.
  \textbf{21(63)} (1947), 365--404. \MR{24575}

\bibitem[NS06]{MR2252935}
Anupan Netyanun and Donald~C. Solmon, \emph{Iterated products of projections in
  {H}ilbert space}, Amer. Math. Monthly \textbf{113} (2006), no.~7, 644--648.
  \MR{2252935}

\bibitem[Opp18]{MR3773065}
Izhar Oppenheim, \emph{Angle criteria for uniform convergence of averaged
  projections and cyclic or random products of projections}, Israel J. Math.
  \textbf{223} (2018), no.~1, 343--362. \MR{3773065}

\bibitem[PR14]{MR3145756}
Evgeniy Pustylnik and Simeon Reich, \emph{Infinite products of arbitrary
  operators and intersections of subspaces in {H}ilbert space}, J. Approx.
  Theory \textbf{178} (2014), 91--102. \MR{3145756}

\bibitem[PRZ12]{MR2903120}
Evgeniy Pustylnik, Simeon Reich, and Alexander~J. Zaslavski, \emph{Convergence
  of non-periodic infinite products of orthogonal projections and nonexpansive
  operators in {H}ilbert space}, J. Approx. Theory \textbf{164} (2012), no.~5,
  611--624. \MR{2903120}

\bibitem[RS72]{MR493419}
Michael Reed and Barry Simon, \emph{Methods of modern mathematical physics.
  {I}. {F}unctional analysis}, Academic Press, New York-London, 1972.
  \MR{493419}

\bibitem[RZ21]{MR4310540}
Simeon Reich and Rafa\l Zalas, \emph{Error bounds for the method of
  simultaneous projections with infinitely many subspaces}, J. Approx. Theory
  \textbf{272} (2021), Paper No. 105648, 24. \MR{4310540}

\bibitem[vN50]{MR34514}
John von Neumann, \emph{Functional {O}perators. {II}. {T}he {G}eometry of
  {O}rthogonal {S}paces}, Annals of Mathematics Studies, No. 22, Princeton
  University Press, Princeton, NJ, 1950. \MR{34514}

\end{thebibliography}

\end{document}